\documentclass[12pt]{article}
\usepackage{amsmath,amsthm,amssymb,amsfonts}

\title{Recurrent Rotor-Router Configurations}
\author{Omer Angel \and Alexander E.\ Holroyd}
\date{12 January 2011}

\newtheorem{thm}{Theorem}
\newtheorem{lemma}[thm]{Lemma}
\newtheorem{cor}[thm]{Corollary}
\newtheorem{prop}[thm]{Proposition}

\newcommand{\Z}{\mathbb{Z}}
\newcommand{\df}{\textbf}

\newcounter{mycount}
\newenvironment{ilist}{\begin{list}{\rm(\roman{mycount})}%
   {\usecounter{mycount}\labelwidth=1cm\itemsep -2pt}}{\end{list}}


\begin{document}
\maketitle

\begin{abstract}
We prove the existence of recurrent initial configurations for the rotor walk
on many graphs, including $\Z^d$, and planar graphs with locally finite
embeddings.  We also prove that recurrence and transience of rotor walks are
invariant under changes in the starting vertex and finite changes in the
initial configuration.
\renewcommand{\thefootnote}{}
\footnotetext{Key words and phrases: rotor walk, rotor-router, quasi-random,
recurrence.} \footnotetext{2010 Mathematics Subject Classification: 05C25;
82C20.}
\end{abstract}

\section{Introduction}

The \df{rotor walk} is a derandomized variant of random walk on
a graph $G$, defined as follows.  To each vertex of $G$ we
assign a fixed cyclic order of its neighbours.  This collection
of orders is called the \df{rotor mechanism}.  At each vertex
there is a \df{rotor}: an arrow which can point to any
neighbour.  An assignment of directions to all the rotors is
called a \df{rotor configuration}.  Starting from some rotor
configuration, a \df{particle} is located at some vertex, and
the particle location and rotor configuration evolve together
in discrete time as follows.  At each time step, the rotor at
the particle's current location is incremented to point to the
next neighbour in the cyclic order, and then the particle moves
to this new neighbour.  The rotor walk is obtained by
repeatedly applying this rule.

We will assume throughout that $G$ is an infinite, connected, simple,
undirected graph with all degrees finite.  Given a rotor mechanism, an
initial rotor configuration, and an initial particle location, it is then
easy to see that the rotor walk either visits each vertex infinitely many
times, or visits each vertex finitely many times (see e.g.
\cite[Lemma~6]{h-propp}). We call these two cases \df{recurrent} and
\df{transient} respectively.

In many settings there are striking similarities between the
behaviour of the rotor walk and the expected behaviour of
\emph{random} walk on the same graph; see e.g.\
\cite{cooper-doerr-spencer-tardos-0,cooper-spencer,
doerr-friedrich,h-propp,levine-peres-2}. However, with regard
to recurrence and transience there can also be differences. For
instance, on any $G$ (even a recurrent graph) it is easy to
find an initial rotor configuration that is transient (indeed,
we can arrange for the particle to trace any self-avoiding
path). In the other direction, it was shown in
\cite{landau-levine} that recurrent initial rotor
configurations exist on the infinite binary tree. These matters
have been investigated further on trees in
\cite{a-h,landau-levine}, but much less is known for more
general graphs.  Our goal is this article is to prove the
existence of recurrent rotor configurations in a broad range of
settings, including many where the random walk is transient.

We will also show that recurrence and transience of rotor walk
do not depend on the starting vertex.  Therefore we may refer
to an initial rotor configuration as recurrent or transient
without specifying the starting vertex.

\begin{thm}\label{T:dichotomy}
Fix a graph, a rotor mechanism and an initial rotor
configuration.  The rotor walk is either recurrent for every
starting vertex, or transient for every starting vertex.
\end{thm}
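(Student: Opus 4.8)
The plan is to reduce Theorem~\ref{T:dichotomy} to a single invariance statement about the rotor configuration, and then to attack that statement by passing to finite graphs with a sink. The reduction rests on an elementary \emph{one-step identity}: if the rotor at the starting vertex $v$ has been set so that one increment makes it point to a prescribed neighbour $u$, then the walk from $v$ begins with the step $v\to u$ and thereafter is exactly the walk from $u$ under the configuration in which the rotor at $v$ has been advanced once. Since deleting finitely many initial steps cannot change whether each vertex is visited infinitely often, the walk from $v$ is recurrent if and only if this walk from $u$ is. Granting the \emph{Key Lemma} below, I may first use it (vertex $v$, start $v$) to pre-adjust the rotor at $v$ to aim at any chosen neighbour $u$ without affecting recurrence, then apply the one-step identity, and finally use it again (vertex $v$, start $u$) to restore the rotor at $v$ to its original value; chaining these shows that recurrence from $v$ and from $u$ agree. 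Because $G$ is connected, iterating along a path between any two vertices then gives the theorem.

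\emph{Key Lemma.} For a fixed starting vertex, advancing the rotor at a single vertex by one step does not change whether the rotor walk is recurrent. (Iterating yields invariance under arbitrary finite configuration changes.)

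To prove the Key Lemma I would first give a finite characterization of recurrence. Exhaust $G$ by finite sets $B_1\subset B_2\subset\cdots$, and for each $n$ form the finite graph $\hat G_n$ by collapsing the complement of $B_n$ to a single absorbing sink. The walk from $s$ on $\hat G_n$ agrees with the walk on $G$ until the latter first leaves $B_n$; hence, writing $u_n(s)$ for the number of departures from $s$ before absorption on $\hat G_n$, the sequence $u_n(s)$ is nondecreasing in $n$, and, using the dichotomy of \cite[Lemma~6]{h-propp}, the walk on $G$ is recurrent exactly when $u_n(s)\to\infty$ and transient exactly when $u_n(s)$ stays bounded. (For recurrence this follows because the first exit time from $B_n$ is at least the distance from $s$ to $\partial B_n$, so it tends to infinity, while the departures from $s$ up to any fixed time are finite and grow without bound; for transience $u_n(s)$ is dominated by the finite total number of departures from $s$.) This replaces the infinite question by the growth of the source odometers $u_n(s)$ on finite graphs with a sink, where rotor-routing is far better behaved: any finite collection of chips routed to the sink is absorbed in finitely many steps, and the resulting odometer and final configuration are independent of the routing order (the abelian property).

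Let $\sigma$ denote $\rho$ with the rotor at $x$ advanced once, so that firing $x$ once under $\rho$ both produces $\sigma$ and moves a chip from $x$ to its new target $y$. On $\hat G_n$ I would route the two chips initially at $s$ and $x$ to the sink in two orders. Routing the $s$-chip first contributes odometer $u_n^\rho$ and leaves a single-chip routing from $x$; firing $x$ once first turns the configuration into $\sigma$ and leaves chips at $s$ and $y$, contributing $u_n^\sigma$ followed by a single-chip routing from $y$. Equating the two total odometers and reading the value at $s$ gives $u_n^\rho(s)-u_n^\sigma(s)=c_n(s)-d_n(s)$, where $c_n(s),d_n(s)$ count departures from $s$ of single chips launched from the neighbours $y$ and $x$. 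The main obstacle is to control this correction well enough to conclude that $u_n^\rho(s)$ and $u_n^\sigma(s)$ diverge, or stay bounded, together: in the transient regime one expects the auxiliary single-chip odometers to converge as $n\to\infty$ to the finite source-visit counts of escaping chips, so their difference is bounded and boundedness is preserved, while in the recurrent regime one must show the correction is of smaller order than $u_n(s)$ itself. I expect this quantitative comparison of single-chip odometers launched from adjacent vertices to be the technical heart of the argument, with the reductions above routine by comparison.
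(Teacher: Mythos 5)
Your reduction of the theorem to the Key Lemma is sound, your finite characterization of recurrence via the truncated graphs $\hat G_n$ is correct, and the two-chip Abelian identity $u_n^\rho(s)-u_n^\sigma(s)=c_n(s)-d_n(s)$ is valid. But the step you defer --- controlling the correction $c_n(s)-d_n(s)$ so that boundedness and unboundedness are preserved --- is not a technicality to be filled in later: it \emph{is} the content of the theorem, and I do not see how to complete it along the lines you sketch. Concretely, in the transient case you need $d_n(s)$ bounded, where $d_n(s)$ is the $s$-odometer of a single chip launched from $x$ under the configuration $\rho'_n$ left behind by the escaping $s$-chip; showing that this auxiliary walk does not itself return to $s$ infinitely often is a statement of exactly the same type as the one being proved (a walk from a different vertex under a finitely modified configuration has the same recurrence type), so the argument is in danger of being circular. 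The same issue appears in the recurrent case, where you must rule out $c_n(s)$ growing as fast as $u_n^\rho(s)$. Note also that your Key Lemma is precisely the Corollary that the paper \emph{deduces from} Theorem~\ref{T:dichotomy}, which is further evidence that it carries the full difficulty rather than being an independent stepping stone.

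The paper's proof avoids any error term by a different finite construction: it truncates to the set $S$ visited by the recurrent walk from $x$ during its first $d_x+m$ returns, and, crucially, splits $x$ into a source $x_+$ and a sink $x_-$. The quantity of interest, ``number of returns to $x$,'' thereby becomes ``number of chips absorbed at $x_-$,'' which the Abelian property makes \emph{exactly} independent of the routing order --- one order reproduces the walk from $x$ (showing all $d_x+m$ chips reach $x_-$), while another order reproduces the walk from the neighbour $y$ (showing it hits $x$ at least $m$ times). If you want to salvage your approach, the lesson is to engineer the finite sink graph so that the recurrence count is an absorption count rather than an odometer value that must be compared across two configurations; as it stands, your proposal leaves the essential estimate unproved.
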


Let $\Z^d$ denote the graph with vertex set $\Z^d$ and an edge
between each pair of vertices at Euclidean distance $1$.

\begin{thm}\label{T:zd}
For any $d\geq 1$ and any rotor mechanism on $\Z^d$, there
exists a recurrent rotor configuration.
\end{thm}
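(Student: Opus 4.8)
The plan is to fix the starting vertex to be the origin $o$—legitimate by Theorem~\ref{T:dichotomy}—and to obtain a recurrent configuration as a limit of configurations that force more and more returns to $o$. The first point is a reduction: by the recurrent/transient dichotomy recalled in the introduction, the walk visits either every vertex infinitely often or every vertex finitely often, so it is recurrent as soon as it visits $o$ infinitely often. (Directly: each visit to $o$ advances the rotor there, so if $o$ is visited infinitely often then each neighbour of $o$ is entered infinitely often, and propagating along paths by connectedness gives the same for every vertex.) It therefore suffices to produce a single configuration under which the rotor walk from $o$ returns to $o$ infinitely many times.

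The set of configurations $\Omega=\prod_v N(v)$, where $N(v)$ is the finite neighbour set of $v$, is compact in the product topology, and this is what I would exploit to pass from finitely many returns to infinitely many. Concretely, I aim to construct configurations $\rho_1,\rho_2,\dots$ together with an increasing sequence of boxes $B_{m_1}\subset B_{m_2}\subset\cdots$ exhausting $\Z^d$, such that (a) for each $k$ the walk from $o$ under $\rho_k$ returns to $o$ at least $k$ times, reading only rotors inside $B_{m_k}$ before the $k$th return, and (b) the construction is \emph{nested}, in the sense that $\rho_{k+1}$ agrees with $\rho_k$ on $B_{m_k}$. Then $\rho:=\lim_k\rho_k$ exists and agrees with each $\rho_k$ on $B_{m_k}$. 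Since the rotor walk is deterministic and consults only the rotors it actually visits, the trajectory of $\rho$ up to the $k$th return to $o$ coincides with that of $\rho_k$ and hence achieves at least $k$ returns; as this holds for every $k$, the limiting walk returns to $o$ infinitely often and $\rho$ is recurrent.

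The substance of the argument is the construction of the $\rho_k$, and here I would use that every vertex of $\Z^d$ has \emph{even} degree $2d$. The naive idea of confining the walk to a single box fails: a boundary rotor cycles through all $2d$ directions, so after boundedly many visits it points out of the box and the particle leaks out, and in any finite orbit the boundary is visited in fixed proportion to $o$, so a single box cannot supply more than $O(1)$ returns. The remedy is a genuinely multiscale configuration. I would build $\rho_k$ scale by scale on nested shells $B_{r_{j+1}}\setminus B_{r_j}$, arranging the rotors on each shell so that a particle which wanders outward from the interior and reaches that shell is \emph{turned around} and routed back toward $o$—thereby producing a return—before its rotors can carry it out to the next shell. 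Leaks across a given shell are then absorbed and reversed by the next shell out, each completed inward excursion contributes a return to $o$, and taking enough shells yields the required $k$ returns while all relevant rotors lie in a finite box; using the same shell recipe at every scale makes the family nested as required in~(b).

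The main obstacle is exactly this turning-around step for an \emph{arbitrary} rotor mechanism. The even degree is essential—it is what lets entries into a shell be balanced against exits back inward—but one must show quantitatively that, whatever the prescribed cyclic orders, each shell can be initialised so that the net effect of the particle's visits is to send it inward more often than outward, and that the resulting inward excursions actually reach $o$ rather than merely oscillating within a shell. Controlling this no-escape/return property uniformly over all mechanisms, and checking that the book-keeping of leaks and recoveries across infinitely many scales is consistent with the nesting in~(b), is where the real work lies; the compactness packaging in the preceding paragraphs is then routine.
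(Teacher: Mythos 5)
Your high-level strategy---nested shells that turn the particle back toward the origin before it can escape, with the configuration defined consistently across all scales---is the right intuition, and indeed matches the paper's construction in spirit. But the proposal leaves unproved exactly the two facts that constitute the proof, and you say so yourself (``this is where the real work lies''). First, the geometric fact that makes the shell initialisation possible for an \emph{arbitrary} rotor mechanism: every vertex $z$ of the outer boundary $\partial([-n,n]^d)$ has exactly \emph{one} neighbour $y$ inside the cube, so ``reflecting'' requires only that the rotor at $z$ be set to point next at $y$---a single rotor setting that works whatever the prescribed cyclic order. Since the boundaries of distinct cubes are disjoint, one fixed configuration handles all scales at once; no sequence $\rho_k$, no compactness, and no nesting book-keeping is needed. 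Your appeal to the even degree $2d$ is a red herring: the argument uses no parity of degrees, only the unique-inner-neighbour property of cube boundaries.

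Second, and more seriously, the ``no-escape/return'' property is not a technical uniformity to be controlled but the combinatorial heart of the proof, and it admits a clean argument you have not supplied. The paper shows: if every boundary vertex of $S$ sends the particle to all its neighbours in $S$ before any other neighbour, then the walk started at $x\in S$ returns to $x$ \emph{before} making any step from $\partial S$ out of $S$. The proof is a counting argument on directed edges: at the first such escaping step from $y\in\partial S$, the vertex $y$ has emitted the particle to all its $S$-neighbours plus once more, yet has received it only from $S$-neighbours, so some directed edge into $y$ was traversed twice; and a separate lemma shows the first directed edge traversed twice by a rotor walk must emanate from the starting vertex, so the walk has already returned. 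This also disposes of your worry about ``leaks absorbed by the next shell out'': there are no leaks---each reflecting boundary is crossed outward only after a return to the origin has already occurred. Without the unique-inner-neighbour observation and this edge-doubling lemma, your outline does not yet constitute a proof.
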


An example was given in \cite{h-propp} of a recurrent rotor
configuration on $\Z^2$. For all $d\geq 3$, Theorem \ref{T:zd}
provides the first proof of the existence of a recurrent
configuration for a translation-invariant rotor mechanism. This
answers a question posed by Jim Propp in 2003 (personal
communication), and stated as an open problem in \cite{h-propp}
and \cite{landau-levine}. Our proof is strongly motivated by a
recent work of Reddy \cite{reddy}, in which it is argued on the
basis of simulation evidence that a certain explicit rotor
configuration on $\Z^3$ is recurrent. Our proof also includes a
simple explicit rotor configuration.

Our construction can be generalized to many other settings, and
in particular we have the following.  A planar embedding of a
planar graph is called \df{locally finite} if every bounded
region of the plane contains only finitely many vertices.
(There exist planar graphs with no locally finite embedding,
such as $\Z^2$ with a singly-infinite path attached to every
vertex.)

\begin{thm}\label{T:planar}
For a planar graph with a locally finite planar embedding, and
a rotor mechanism in which each rotor points to the neighbours
in clockwise or anticlockwise order (possibly with different
directions at different vertices), there exists a recurrent
rotor configuration.
\end{thm}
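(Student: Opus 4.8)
The plan is to exhibit a single explicit configuration and prove it recurrent. By Theorem~\ref{T:dichotomy} the starting vertex is irrelevant, so I fix a root $v_0$ and must show that the particle returns to $v_0$ infinitely often. The backbone of the argument is a topological exhaustion of the plane by nested contours. Using the locally finite embedding, I would choose Jordan curves $\gamma_0,\gamma_1,\dots$ in the plane, each avoiding all vertices and crossing each edge transversally in at most one point, with $\gamma_n$ enclosing $v_0$, enclosed in turn by $\gamma_{n+1}$, and with the enclosed vertex sets $V_0\subset V_1\subset\cdots$ exhausting the whole vertex set. Local finiteness guarantees that each $\gamma_n$ may be taken to meet only finitely many edges, so the crossed set $E_n$ is a \emph{finite} edge cut separating the finite set $V_n\ni v_0$ from the rest of $G$. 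A key point is that the contours are curves in the plane, not cycles in $G$; this lets the same scheme apply to graphs with few or no cycles, such as trees.

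Next I would specify the configuration, mirroring the construction behind Theorem~\ref{T:zd}. At a boundary vertex of $E_n$ the clockwise cyclic order of neighbours is \emph{fixed} by the embedding, so the only freedom is the starting position of the rotor; I would set it so that, read in clockwise order, the round-robin sweep of the rotor at each boundary vertex begins just inside the outward crossing direction. The intended effect is that each cut $E_n$ behaves as an inward-reflecting barrier: the clockwise order at the boundary vertices is compatible with the cyclic order in which the edges of $E_n$ are met along $\gamma_n$, so outward crossings of $\gamma_n$ are forced to occur in a coordinated, \emph{winding} fashion around the contour rather than repeatedly through the same edge.

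The crux is a barrier lemma exploiting that \emph{every} rotor turns in a single rotational sense. Assume for contradiction that the walk is transient; then every vertex is visited finitely often, and the particle eventually leaves each $V_n$ forever, crossing $E_n$ a net total of one time outward. Because all rotors turn clockwise, the particle's trajectory carries a well-defined cumulative winding number about $v_0$, and each crossing of $E_n$ (outward versus inward) alters this winding in a controlled way that can be tracked by a discrete flux/Green-type identity along the cut. The aim is to show that a permanent escape past $\gamma_n$ would require the boundary rotors on $E_n$ to accumulate unboundedly many net turns, whereas transience caps the number of visits—hence the number of completed turns—at each boundary vertex by a finite amount. This contradiction shows the particle cannot escape past any contour, and since the contours exhaust $V$, every vertex is visited infinitely often, so the configuration is recurrent.

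The main obstacle will be making this barrier lemma precise: specifically, showing how the fixed planar cyclic order in which the edges of $E_n$ are crossed couples to the clockwise rotor order at the boundary vertices, so that "clockwise turning everywhere" really does translate into a monotone winding that cannot be sustained. I expect the delicate bookkeeping to lie in correctly accounting for boundary vertices whose inner and outer neighbours are interleaved in the cyclic order, and in verifying that the flux identity on $E_n$ forces net rotation rather than merely net flux (which conservation alone fixes at one and which therefore cannot by itself yield a contradiction). The topological exhaustion and the verification that the explicit rotors have the claimed starting positions should be routine by comparison.
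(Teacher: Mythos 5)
Your setup (an exhaustion by Jordan curves meeting each edge at most once, so that the crossed edges form finite cuts around finite vertex sets $V_n$) is a reasonable analogue of the paper's exhaustion, and your intended mechanism --- set each boundary rotor so that it sweeps the inward edges first, making each cut an inward-reflecting barrier --- is exactly the mechanism the paper uses (Proposition~\ref{reflect} together with Lemmas~\ref{twice} and~\ref{return}: if every finite set lies in a finite set with reflecting boundary, the walk is recurrent). But there is a genuine gap at precisely the point you flag as ``delicate bookkeeping'': a vertex $y$ outside $V_n$ can have its neighbours in $V_n$ \emph{interleaved} in the cyclic order with neighbours outside $V_n$, and then no choice of starting position for a rotor rotating in one fixed sense can send the particle to all inner neighbours before any outer one. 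So the reflecting-boundary property can simply fail for the sets $V_n$, no matter how the rotors are initialized; this is not bookkeeping but the central difficulty of the theorem. (Such interleaving does occur: the outer neighbours caught ``between the pincers'' necessarily lead into finite components of the complement, but they are still outer neighbours.) The paper's Lemma~\ref{smooth} exists exactly to repair this: it shows that any finite set can be enlarged to a finite \emph{pincerless} set by adjoining those finite components, and the only topological input is that inner neighbours cannot alternate with neighbours leading to infinity, by planarity and local finiteness. Without this enlargement step your construction does not produce reflecting boundaries.

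Your fallback, the ``barrier lemma'' via winding numbers and a flux identity along $E_n$, is not carried out, and I do not see how it could be: as you yourself note, flux conservation only pins the net number of outward crossings at one, and transience bounds the number of visits (hence rotor turns) at each boundary vertex, so there is no unbounded quantity from which to extract a contradiction --- unless the rotors are arranged so that any exit from $V_n$ forces a prior repeated directed edge out of the starting vertex, which is exactly the reflecting-boundary argument of Lemma~\ref{return} and is available only once pincerlessness is secured. The paper's proof uses no winding numbers at all: Lemma~\ref{twice} plus the reflecting property give the return, and the clockwise/anticlockwise hypothesis is used only to turn a contiguous block of inner neighbours into a block that is served first. (Minor point: the appeal to Theorem~\ref{T:dichotomy} is unnecessary, since Proposition~\ref{reflect} gives recurrence from every starting vertex directly.)
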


\begin{thm}\label{T:any}
For any graph, there exists a rotor mechanism that admits a
recurrent rotor configuration.
\end{thm}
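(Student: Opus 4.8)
The plan is to reduce recurrence to the existence of infinitely many returns to a fixed vertex, to build a mechanism from a spanning tree so that Theorem~\ref{T:planar} supplies a recurrent configuration on the tree, and then to extend this to all of $G$ by controlling the non-tree edges. First I would fix a root $o$ and recall the dichotomy stated in the introduction: the walk visits every vertex either infinitely often or finitely often. Hence it suffices to produce a mechanism and an initial configuration for which the walk returns to $o$ infinitely often, since this rules out the ``finitely often'' alternative and forces recurrence (and by Theorem~\ref{T:dichotomy} the choice of starting vertex is then irrelevant). Equivalently, I must ensure that every excursion from $o$ is finite.

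Next I would choose a spanning tree $T$ of $G$ rooted at $o$. Since $G$ is locally finite, so is $T$, and every ball $\{v\in T: d_T(o,v)\le r\}$ is finite; drawing each vertex at Euclidean radius proportional to its depth therefore gives a locally finite planar embedding of $T$. By Theorem~\ref{T:planar}, applied to $T$ with the clockwise mechanism of this embedding, there is a configuration $\rho$ that is recurrent for the rotor walk \emph{on $T$}; in particular the $T$-walk returns to $o$ infinitely often. I would then define a mechanism on all of $G$ by taking, at each vertex, the clockwise cyclic order of its tree-neighbours and inserting each non-tree neighbour into a prescribed slot (for definiteness, immediately following the edge toward the root), and I would take the initial $G$-configuration to agree with $\rho$ on the tree rotors, so that the two walks coincide as long as no non-tree edge is used. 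Heuristically the walk on $G$ should shadow the recurrent tree walk, with each non-tree edge producing only a bounded detour before the tree structure reasserts itself.

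The hard part is precisely this control of the non-tree edges, and I expect it to be the main obstacle. Because the target configuration is recurrent, every vertex, and hence every edge, is traversed infinitely often, so the non-tree edges cannot be ignored; moreover adding edges does not in general preserve recurrence (passing from $\Z^2$ to $\Z^3$ destroys recurrence of random walk), so a genuine argument is needed rather than a monotonicity. I would handle this by the nested-exhaustion device underlying the proofs for $\Z^d$ and for planar graphs: set the cyclic orders and initial rotors so that at each vertex the neighbours closer to $o$ are favoured on first traversal, making the configuration ``inward-pointing'' in the sense that a freshly entered vertex first returns the particle toward $o$; then compare the walk on $G$ with the rotor walk on the finite ball $B_n=\{v:d_G(o,v)\le n\}$, which cannot escape and so returns to $o$, and argue that the inward bias on the sphere $\partial B_n$ prevents the $G$-walk from leaving $B_n$ before that return occurs. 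Establishing this non-escape property, namely that each non-tree detour is finite and that every excursion therefore terminates, is the crux and is where the real work of the theorem lies.
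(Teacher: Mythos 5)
Your final paragraph contains the right idea --- and it is, in essence, the whole proof --- but as written the argument has a gap exactly where you say it does, and the first two-thirds of the proposal is a detour that never gets used. The spanning-tree construction and the appeal to Theorem~\ref{T:planar} only yield recurrence of the rotor walk \emph{on $T$}; since, as you yourself observe, recurrence is not monotone under adding edges, nothing from that thread survives into the argument you actually need, and it can be deleted. The point of Theorem~\ref{T:any} is that you are free to choose the cyclic orders, and that freedom is better spent not on realizing a tree inside $G$ but on directly imposing the reflecting-boundary condition of Section~3 on an exhaustion of $G$.

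Concretely: take $S_0=\{o\}$ and $S_{i+1}:=S_i\cup\partial S_i$, so that $S_i$ is the ball of radius $i$ about $o$ and $\partial S_i$ is the sphere of radius $i+1$. Each vertex $v\neq o$ lies in exactly one of the disjoint sets $\partial S_0,\partial S_1,\dots$ (namely the one with $i=d(o,v)-1$), so you may independently at each $v$ choose the cyclic order and the initial rotor so that $v$ sends the particle to all of its neighbours in $S_i$ (equivalently, its neighbours closer to $o$) before any other neighbour. This is exactly your ``inward-pointing'' prescription, and it makes every $S_i$ have reflecting boundary. The ``non-escape property'' that you defer as the real work is then precisely Proposition~\ref{reflect}, via Lemma~\ref{return}: a walk started in a set with reflecting boundary must return to its starting vertex before stepping out of $S_i\cup\partial S_i$, because the first step from $\partial S_i$ outward would force some boundary vertex to have received the particle twice along the same directed edge, and by Lemma~\ref{twice} the first doubly-traversed directed edge must emanate from the starting vertex. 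If you are permitted to cite Proposition~\ref{reflect}, your sketch closes into the paper's three-line proof; if you intended to establish non-escape from scratch, the missing ingredient is exactly that double-traversal counting argument, and without it the claim that the inward bias ``prevents the $G$-walk from leaving $B_n$'' remains unproved.
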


For some graphs including $\Z^d$, we can also give recurrent
configurations for which the behaviour of the rotor walk is
extremely regular, as follows. For $x\in\Z^d$, define
\[
\alpha(x):=\begin{cases}
  1& \text{ if $|x_1|,\ldots,|x_d|$ have a unique maximum;}\\
  0& \text{ otherwise.}
\end{cases}
\]

\begin{thm}\label{T:zd-exact}
  For any rotor mechanism on $\Z^d$, there exists a recurrent rotor
  configuration such that for the rotor walk started at $0$, just before the
  $(k+1)$st traversal from $0$ to $(1,0,\dots,0)$, vertex $x$
  has been entered
  exactly $$\Big[2d(k-\|x\|_\infty)+\alpha(x)\Big]^+$$ times.
\end{thm}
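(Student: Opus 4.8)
The plan is to reduce the statement to an exact formula for the \emph{odometer} $u_k(x)$, the number of times the particle departs from $x$ before the $(k+1)$st traversal $0\to(1,0,\dots,0)$, and then to compute $u_k$. The key preliminary observation is that at each such moment the particle sits at $0$, so its trajectory since time $0$ is a closed walk based at $0$; counting departures and arrivals along this walk shows that every vertex is entered exactly as many times as it is departed from (the two counts at $0$ agree because the start and the current position coincide). Hence it suffices to prove $u_k(x)=[2d(k-\|x\|_\infty)+\alpha(x)]^+$. Writing $n(x):=k-\|x\|_\infty$, the target asserts that each $x$ with $\|x\|_\infty\le k$ fires $n(x)$ complete rotor cycles, that is $2d\,n(x)$ times, plus one extra step exactly when $\alpha(x)=1$.

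Next I would pin down the explicit configuration. Since $2d$ consecutive departures send the particle once to each neighbour irrespective of the cyclic order, the only freedom affecting the net edge-flow is the recipient $c_1(x)$ of the single ``extra'' step, namely the neighbour the rotor points to immediately after its starting position; for any given mechanism this recipient can be prescribed arbitrarily by choosing where the rotor starts. I would set $c_1(x)$, at each $x$ with $\alpha(x)=1$ and unique largest coordinate $j$, to be the inward neighbour $x-\operatorname{sign}(x_j)e_j$, and start the rotor at $0$ so that the particle departs toward $(1,0,\dots,0)$ once per cycle. The point of this choice is conservation of flow: a direct computation of the discrete Laplacian of $\|\cdot\|_\infty$ shows that a vertex with $\alpha(x)=0$ whose maximum is attained by $p$ coordinates must absorb exactly $p$ extra units and emit none, a vertex with $\alpha(x)=1$ must absorb and emit exactly one, and the origin must absorb $2d$ (supplied by its $2d$ nearest neighbours, which is what produces the count $u_k(0)=2dk$). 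Routing each extra step inward along the unique maximal coordinate realises all these balances simultaneously, as one checks on the interior, on the boundary shell $\|x\|_\infty=k$, and at $0$. This is the computation that explains the appearance of $\alpha$.

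With the configuration fixed, the formula would be proved by induction on $k$. At the checkpoint after $k$ excursions the rotor at each already-visited vertex has advanced $u_k(x)\bmod 2d=\alpha(x)$ steps, so the rotor state there is a function of $x$ alone, the same at every checkpoint; this self-reproducing property is what makes the induction close. The inductive step amounts to showing that during the $(k+1)$st excursion the particle performs exactly one further complete cycle at every $x$ with $\|x\|_\infty\le k$, enters each face-centre at radius $k+1$ (those with $\alpha=1$) exactly once, avoids the edges and corners of the radius-$(k+1)$ cube, and returns to $0$ precisely on its $2d$th departure, poised to go right again.

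The main obstacle is exactly this control of a single excursion. Conservation together with the Laplacian computation identifies $u_k$ as the unique flow-consistent candidate, but neither fact shows that the \emph{deterministic} trajectory realises it, and the difficulty is to prove this for an \emph{arbitrary} cyclic order, so that the net counts (though not the microscopic path) are mechanism-independent. I expect to handle it by a nested induction on the $\ell^\infty$-shells, showing that the particle cannot leave the radius-$(k+1)$ cube and cannot end the excursion early: the rotors on each already-saturated shell, having returned to their checkpoint state, channel the walk so that an inner shell is revisited a full cycle before the next is reached, and the excursion can close only after the rotor at $0$ has turned once. Combining this shell-by-shell filling with the inward routing of the extra steps and the parity bookkeeping carried by $\alpha$ is the delicate part of the argument.
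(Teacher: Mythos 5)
Your reduction to the odometer, your explicit configuration at the $\alpha=1$ vertices, and your flow/Laplacian heuristic for why $\alpha$ appears all match the paper's construction. But the proof has a genuine gap exactly where you say the difficulty lies: the control of a single excursion is not carried out, only described as something you ``expect to handle'' by a nested shell induction, and that sketch does not close. The circularity is this: to show the rotors return to their checkpoint state you need to know that each vertex of $S_k$ performs a whole number of cycles during the excursion, and to show that you want to use the restored rotor state. Flow conservation identifies $u_k$ as the unique \emph{consistent} candidate, but (as the transient configurations on the same graph show) consistency alone never forces the deterministic trajectory to realise it. Nothing in your proposal supplies the mechanism that breaks this circle.

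The paper breaks it with a structural invariant you do not have: at each checkpoint time $t_k$ the rotors restricted to $S_k$ form a \emph{unicycle} rooted at the origin (one oriented cycle through the root plus trees directed towards it), and a lemma of \cite{hlmppw} then guarantees that $2m$ rotor steps from a unicycle traverse every edge of the finite auxiliary graph exactly once in each direction and restore the configuration --- regardless of the cyclic orders. Combined with the reflecting-boundary property (which confines the excursion to $S_k\cup\partial S_k$), this gives the exact Eulerian sweep in one stroke, with no shell-by-shell analysis. Maintaining the unicycle invariant is also why the paper must constrain the rotors at the $\alpha(x)=0$ vertices of each shell to form a forest pointing inward (condition (ii) of its Proposition~\ref{exact}); your proposal leaves those rotors unspecified on the grounds that they do not affect the net flow, but they do affect the trajectory, and without the forest condition the inductive invariant fails. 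To repair the argument you would need either to import the unicycle lemma and verify both the reflecting-boundary and forest conditions for the cubes $[-k,k]^d$, or to prove an equivalent Eulerian-circuit statement from scratch; the parity bookkeeping and the ``self-reproducing rotor state'' are consequences of that statement, not a route to it.
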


For the reader's convenience we present an explicit family of
configurations on $\Z^d$ satisfying the conclusion of Theorem
\ref{T:zd-exact} (see Section~\ref{S:exact} for a more general
result). Let $x\in\Z^d$. If $\alpha(x)=1$, set the rotor at $x$
so that it will {\em next} point to the unique neighbour $y$
such that $\|y\|_\infty<\|x\|_\infty$.  If $\alpha(x)=0$ and
$x\neq 0$, let the rotor at $x$ point towards any neighbour $y$
with $\|y\|_\infty<\|x\|_\infty$.  Set the rotor at $0$
arbitrarily.

\section{Invariance}

In this section we prove Theorem~\ref{T:dichotomy}, and a
corollary thereof.  These results will not be needed for the
proofs of Theorems \ref{T:zd}--\ref{T:zd-exact}.  The proof of
Theorem~\ref{T:dichotomy} uses the Abelian property of the
rotor walk.  We will use a truncation argument in order to
apply a version of the Abelian property for finite graphs
proved in \cite{hlmppw}. An alternative approach would be to
derive a version of the Abelian property that holds for
transfinite-time walks on infinite graphs.  Write $d_x$ for the
degree of vertex $x$.

\begin{proof}[Proof of Theorem~\ref{T:dichotomy}]
It is clearly sufficient to prove that if a rotor configuration
is recurrent for the rotor walk started at some vertex $x$ then
it is also recurrent for the walk started at any neighbour $y$
of $x$. For any $m\geq0$, we show that the rotor walk started at $y$ visits
$x$ at least $m$ times.

Let $S$ be the finite set of vertices visited by the rotor walk
started at $x$ until it has made $d_x+m$ returns to $x$.  Let
$F$ be the subgraph of $G$ induced by $S$, modified as follows:
add a sink vertex $z$, and replace each edge of $G$ leaving $S$
by a directed edge to $z$; also split $x$ into two vertices
$x_+$ and $x_-$, and split each edge incident to $x$ into a
directed edge from $x_+$ and a directed edge to the sink $x_-$.
By the Abelian property, \cite[Lemma 3.9]{hlmppw}, if we start
$d_x+m$ rotor particles at $x_+$, and let them perform rotor
walks until they reach the set of sinks $\{z,x_-\}$, then they
all in fact reach $x_-$, regardless of the order in which they
move.

Start $d_x+m$ particles at $x_+$ and move them in $F$ in the following order.
First, let $d_x$ particles each take one step.  This leaves the rotor
configuration unchanged, and one particle at each neighbour of $x$ (including
$y$).  Now let the particle at $y$ perform rotor walk.  It will follow
exactly the trajectory of the rotor walk in $G$ started at $y$.  Each time it
is absorbed at $x_-$, continue with one of the $m$ particles remaining at
$x_+$. It follows that the rotor walk from $y$ in $G$ visits $x$ at least $m$
times before leaving $S$.
\end{proof}

One consequence of Theorem~\ref{T:dichotomy} is that recurrence
and transience are also insensitive to local changes in the
configuration.

\begin{cor}
Fix a graph and a rotor mechanism.  If two rotor configurations
differ only at finitely many vertices, then they are either
both recurrent or both transient.
\end{cor}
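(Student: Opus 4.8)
The plan is to reduce the corollary to Theorem~\ref{T:dichotomy} together with the finite Abelian property, by a truncation argument closely parallel to the proof just given. First I would reduce to the case where the two configurations $\rho$ and $\rho'$ differ at a single vertex $v$: since they differ at only finitely many vertices, I can interpolate by a finite chain of configurations, each obtained from the previous one by altering a single rotor, so it suffices to treat one alteration at a time. Moreover the statement is symmetric in $\rho$ and $\rho'$, so it is enough to prove that if $\rho$ is recurrent then $\rho'$ is recurrent; applying this implication with the roles of $\rho$ and $\rho'$ exchanged then rules out the mixed case. By the dichotomy noted in the introduction, to prove $\rho'$ recurrent it suffices to show that the $\rho'$-walk started at $v$ visits $v$ at least $m$ times, for every $m$.

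Fix $m$, and choose $N$ to be a multiple of $d_v$ with $N\geq m$. Since $\rho$ is recurrent, the $\rho$-walk from $v$ returns to $v$ infinitely often; let $S$ be the finite set of vertices it visits up to its $N$th return, and form the finite sink graph $F$ exactly as in the proof of Theorem~\ref{T:dichotomy}: adjoin a sink $z$ for edges leaving $S$, and split $v$ into a source $v_+$ and a sink $v_-$. As in that proof, starting $N$ particles at $v_+$ in $F$ with the rotors of $\rho$, all $N$ are absorbed at $v_-$ rather than at $z$.

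The crux is to transfer this conclusion from $\rho$ to $\rho'$, which on $F$ differ only in the rotor at $v_+$. Here the choice $d_v\mid N$ is essential: a particle that leaves $v_+$ can never return to it (all edges into $v$ lead to the sink $v_-$), so the $N$ particles fire the rotor at $v_+$ exactly $N$ times, and since $N$ is a multiple of $d_v$ they deposit precisely $N/d_v$ particles on each neighbour of $v$, independently of the initial rotor direction at $v_+$. Thus after $v_+$ has emitted all its particles, the $\rho$- and $\rho'$-systems are in identical states (the same particle configuration on the neighbours of $v$, and identical rotors everywhere else, since $\rho$ and $\rho'$ agree off $v$); by the Abelian property the subsequent evolution, and in particular the number of particles reaching $v_-$, is the same for both. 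Hence under $\rho'$ too all $N$ particles reach $v_-$. Finally, routing these particles one at a time—each fires the rotor at $v_+$ once and then walks until absorbed—concatenates into exactly the trajectory of the $\rho'$-walk from $v$ until it leaves $S$, with each absorption at $v_-$ corresponding to a visit to $v$; so the $\rho'$-walk visits $v$ at least $N\geq m$ times. I expect the comparison step to be the main obstacle: changing the rotor at $v$ does alter the order in which the neighbours of $v$ receive particles, and it is only the multiple-of-$d_v$ bookkeeping, combined with the order-independence supplied by the Abelian property, that makes the two emissions interchangeable.
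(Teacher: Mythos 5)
Your proof is correct, but it takes a genuinely different route from the paper's. The paper also first reduces to configurations differing at a single vertex, but it reduces further to the case where the rotor at $x$ is incremented \emph{exactly once} in $r'$ relative to $r$; it then observes that the very first step of the (recurrent) rotor walk started at $x$ under $r$ performs precisely this increment, so the remainder of that walk \emph{is} the $r'$-walk started at the neighbour $y$ reached, and the tail of a recurrent walk is recurrent -- Theorem~\ref{T:dichotomy} then finishes the job in three lines, with no further appeal to the Abelian property. You instead rerun the whole truncation-and-Abelian machinery from the proof of Theorem~\ref{T:dichotomy}: the key new idea is to take the number $N$ of particles to be a multiple of $d_v$, so that the emission phase at the source $v_+$ deposits $N/d_v$ particles on each neighbour regardless of the initial rotor direction there, after which the two systems agree except at the dead vertex $v_+$ and the Abelian property forces equal absorption counts. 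This is sound (one small point worth making explicit: after emission the rotors at $v_+$ itself still differ between the two systems, and you need to note that this is harmless because $v_+$ has no incoming edges and never fires again -- your parenthetical ``identical rotors everywhere else'' quietly skips over this). Your argument handles an arbitrary change of the rotor at $v$ in one step rather than decomposing into unit increments, and it makes the symmetry between $\rho$ and $\rho'$ immediate; the price is that it is considerably longer and duplicates work that the paper's one-step observation renders unnecessary.
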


\begin{proof}
It suffices to consider the case of two rotor configurations
$r$ and $r'$ that differ only at a single vertex $x$, at which
the rotor is incremented once in $r'$ compared with $r$.
Suppose that $r$ is recurrent, and start a rotor walk at $x$.
At the first step, the rotor configuration becomes $r'$, and
the particle moves to a neighbour $y$, say.  Hence the rotor
walk started at $y$ is recurrent for $r'$, i.e.\ $r'$ is
recurrent.
\end{proof}

\section{Recurrent configurations}

Theorems \ref{T:zd}--\ref{T:any} are consequences of the more
general result below. For a set of vertices $S$, let $\partial
S$ denote its outer vertex-boundary:
\[
\partial S := \{x\in S^C: x \text{ has a neighbour in }S\}.
\]
We say that $S$ has \df{reflecting boundary} (for a given rotor mechanism and
initial rotor configuration) if for every vertex $y$ in $\partial S$, the
rotor at $y$ will send the particle to each of $y$'s neighbours in $S$ before
sending it to any other neighbour of $y$.

\begin{prop}\label{reflect}
If for some rotor configuration on a graph $G$, every finite
set of vertices is a subset of some finite set with reflecting
boundary, then the rotor walk starting from any vertex is
recurrent.
\end{prop}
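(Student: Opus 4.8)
The plan is to show that the rotor walk from any starting vertex visits that vertex infinitely often, which by the dichotomy (and by the definition of recurrence) implies every vertex is visited infinitely often. Fix a starting vertex $v$ and suppose for contradiction that the walk is transient, so each vertex is visited only finitely many times. The key observation I would exploit is a conservation law for rotors: after the particle has left a vertex $x$ exactly as many times as it has entered $x$ (up to the starting/current location), the rotor at $x$ has been advanced by that same number of steps, so the number of times the rotor at $x$ has pointed along each outgoing edge is determined, up to $\pm 1$, by the number of departures from $x$. Summing over a region, the number of particle crossings from $S$ to $S^C$ differs from the number of crossings from $S^C$ to $S$ by a bounded amount. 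In a transient walk, the particle escapes to infinity, so across the boundary of any large set there must be a net outward flow.

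The main idea is to contain this escape using a reflecting set. By hypothesis, given any finite set I can find a finite set $S$ with reflecting boundary containing it; I would choose $S$ large enough to contain the starting vertex $v$ together with a long initial segment of the trajectory. The defining property of a reflecting boundary is that each vertex $y\in\partial S$ sends the particle back into $S$ (to each of its neighbours in $S$) before it ever sends the particle outward. First I would quantify how many times the particle can enter $\partial S$ from $S$ before it is permitted to exit through the boundary: because the rotor at each $y\in\partial S$ must first serve all of its neighbours in $S$, a crossing outward from a given $y$ requires at least one prior crossing inward along the corresponding edge. This gives an inequality bounding the number of exits from $S$ in terms of the number of entries into $S$ from the boundary.

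Then I would combine the reflecting-boundary inequality with the rotor conservation law. The conservation law forces the net number of departures from $S$ to equal (up to a bounded error depending only on $|\partial S|$ and the degrees) the net number of arrivals into $S$ from outside; but a transient walk started inside $S$ must cross the boundary outward strictly more often than it crosses inward, and in fact must escape $S$ permanently, giving infinitely many net outward crossings as $S$ is enlarged. The reflecting property says outward crossings through each boundary vertex are dominated by inward crossings through that same vertex. I expect the cleanest route is to fix a single finite $S\supseteq\{v\}$ with reflecting boundary and argue that the particle can never leave $S$: each time it reaches a boundary vertex $y$, the rotor there must first point it back into $S$, and by the time the rotor at $y$ could point outward, the walk has already revisited $S$ so many times that — because $S$ is finite — some vertex of $S$ is visited more than its transient bound allows, a contradiction.

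The hard part will be making the counting rigorous: I need to track simultaneously the rotor advance at every boundary vertex and the particle's crossing counts, and show that no finite amount of activity inside $S$ suffices to advance all boundary rotors past their in-$S$ neighbours before the particle is trapped. A clean way to do this is to note that if the particle ever leaves $S$ and (by transience) never returns, then only finitely many crossings of $\partial S$ occur; but each of the finitely many boundary vertices absorbs the particle back into $S$ on its first several visits, so the total number of steps spent inside the finite set $S$ is both finite and, by revisiting, unbounded — the contradiction I want. I would take care that the reflecting hypothesis is applied to a set actually containing the start vertex, and that "before" in the definition is used to guarantee the very first boundary excursions return inward. Once the trajectory is confined to a finite set $S$, it must visit some vertex infinitely often, hence (by the Lemma~6 dichotomy cited in the introduction) every vertex infinitely often, so the configuration is recurrent.
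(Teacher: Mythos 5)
There is a genuine gap, and it sits exactly where you locate the ``hard part.'' Your proposed cleanest route --- fix a single finite reflecting set $S\ni v$ and argue the particle can never leave it --- is false. A reflecting boundary is a statement about the \emph{initial} rotor positions only: each visit to a boundary vertex $y$ advances its rotor, and after $y$ has served all of its neighbours in $S$ the rotor does point outward and the particle does leave $S\cup\partial S$. (On $\Z$ with $S=\{0\}$ and rotors at $\pm1$ aimed at $0$, the walk escapes $\{-1,0,1\}$ after five steps.) No single finite set confines the walk; the hypothesis that \emph{every} finite set lies inside some reflecting set is essential and must be re-invoked infinitely often. The paper's proof does exactly this: each time the particle is back at $v$, take $A$ to be the set of vertices visited so far, choose a reflecting $S\supseteq A$; since $\partial S$ is as yet unvisited its rotors are still in their initial positions, so $S$ still has reflecting boundary and forces one more return. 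You gesture at choosing $S$ to contain a long initial segment of the trajectory, but you never set up this iteration, and instead commit to the single-set confinement claim.

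The second gap is the mechanism by which a reflecting boundary forces a return. Your contradiction --- ``some vertex of $S$ is visited more than its transient bound allows'' --- is not a deduction: transience asserts only that each vertex is visited finitely often and supplies no numerical bound that finitely many reflections could violate; likewise ``finite and unbounded'' is not a contradiction for a fixed finite quantity. The missing engine is a precise pigeonhole argument: consider the first step from $\partial S$ into $S^C$, made from some $y$. By the reflecting property $y$ has by then emitted to all its neighbours in $S$ and once more besides, yet (this being the first such step) it has only ever received the particle from its neighbours in $S$; hence some directed edge into $y$ from $S$ has been traversed twice. One then needs the separate lemma that the \emph{first} directed edge traversed twice by a rotor walk must emanate from the walk's starting vertex (compare in-degrees and out-degrees of the trajectory at the offending vertex), so the walk has already returned to its start before any escape from $S\cup\partial S$. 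Your flow/conservation heuristic cannot substitute for this: for a transient walk the excess of outward over inward crossings of $\partial S$ is exactly $1$, which is entirely compatible with the reflecting inequality you derive, so no contradiction arises at that level of counting.
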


To prove the above result, it will sometimes be convenient to
identify $G$ with the directed graph in which each undirected
edge is replaced with two directed edges, one in each
direction. We say that the rotor walk traverses the directed
edge $(x,y)$ when the particle makes a step from vertex $x$ to
vertex $y$.

\begin{lemma}\label{twice}
  If a rotor walk started at $x$ traverses some directed edge twice, then
  the first directed edge to be traversed twice is from $x$. In particular
  such a walk has returned to $x$.
\end{lemma}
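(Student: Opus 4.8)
The plan is to argue by contradiction, combining the cyclic structure of the rotor mechanism with a conservation-of-flow count (in-degree equals out-degree at intermediate vertices) and the pigeonhole principle. Represent the walk as a sequence of traversed directed edges $e_1,e_2,\dots$, where $e_i=(a_i,b_i)$, $a_1=x$, and $b_i=a_{i+1}$. Assuming some directed edge is traversed twice, let $t$ be the least index such that $e_t=e_s$ for some $s<t$; then $e_1,\dots,e_{t-1}$ are all distinct, and $e_t$ repeats exactly one of them. Write $e_t=(u,v)$; the goal is to show $u=x$, which immediately gives that the particle is located at $x=a_t$ just before step $t$, so the walk has returned to $x$.

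First I would record the effect of the rotor cycling at $u$. Because the outgoing edges of $u$ are traversed in a fixed cyclic order, two traversals of the same outgoing edge $(u,v)$ are separated by exactly one full cycle, i.e.\ by $d_u$ departures from $u$. The edge $(u,v)$ occurs exactly twice among $e_1,\dots,e_t$ (at steps $s$ and $t$, with no third occurrence, since $e_t$ is the first repeat), and its second occurrence is the final step $t$; hence the total number of departures from $u$ among $e_1,\dots,e_t$ is at least $d_u+1$.

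Next I would set up the flow balance. Writing the positions as $p_0=x,p_1,\dots,p_t$ with $p_j=b_j$ and $p_t=v$, the number of exits from a vertex $w$ equals $\#\{0\le j\le t-1: p_j=w\}$ and the number of entries equals $\#\{1\le j\le t: p_j=w\}$, so entries minus exits equals $\mathbf{1}[w=v]-\mathbf{1}[w=x]$. Suppose, for contradiction, that $u\neq x$. Since $G$ is simple we also have $u\neq v$, so the number of entries to $u$ equals the number of exits from $u$, which is at least $d_u+1$ by the previous step.

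Finally the pigeonhole step delivers the contradiction: $u$ has been entered at least $d_u+1$ times but has only $d_u$ incoming directed edges, so some incoming edge $(w,u)$ is traversed at least twice. Every entry to $u$ occurs at a step $i$ with $b_i=u$, and since $b_t=v\neq u$, all such steps are strictly before $t$; hence $(w,u)$ is traversed twice within $e_1,\dots,e_{t-1}$, contradicting their distinctness. Therefore $u=x$, as claimed. I expect the main obstacle to be the careful bookkeeping rather than any deep idea---specifically, verifying that a full rotor cycle forces at least $d_u+1$ departures from $u$, and that the repeated incoming edge is forced to lie strictly before time $t$; the conceptual content is simply rotor-cyclicity together with conservation of flow and pigeonhole.
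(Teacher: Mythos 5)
Your proof is correct and takes essentially the same route as the paper's: rotor cyclicity forces at least $d_u+1$ departures from the tail $u$ of the first repeated edge, the absence of earlier repeats caps the arrivals at $u$ by $d_u$, and flow conservation then forces $u=x$. The paper states this directly rather than by contradiction, but the content is identical.
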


\begin{proof}
  Let $(y,z)$ be the first directed edge to be traversed twice. At that
  time $y$ has sent the particle to all other neighbours exactly once, thus
  $y$ has emitted the particle $d_y+1$ times. Since no other edge has been
  traversed twice, $y$ has received the particle at most $d_y$ times. Thus
  $y=x$.
\end{proof}

\begin{lemma}\label{return}
  If $S$ is a set with reflecting boundary, then the rotor walk started at a
  vertex $x\in S$ will return to $x$ before leaving $S\cup \partial S$.
\end{lemma}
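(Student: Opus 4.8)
The plan is to argue by contradiction, supposing that the walk leaves $S\cup\partial S$ before it ever returns to $x$. Let $T$ be the first time the particle leaves $S\cup\partial S$. The step taken at time $T$ must go from some vertex $v\in\partial S$ to a vertex $w\notin S\cup\partial S$, since every neighbour of a vertex of $S$ lies in $S\cup\partial S$ and so the particle can only escape from the boundary. Because the walk has not returned to $x$ by time $T$, Lemma~\ref{twice} guarantees that no directed edge has been traversed twice up to time $T$; every traversed directed edge is used at most once. This distinctness is what will let me convert the reflecting-boundary hypothesis into a counting inequality.

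Next I would examine the cut between $S$ and its complement. Every edge leaving $S$ ends in $\partial S$, so I track two quantities: the number $A$ of traversals from $S$ into $\partial S$, and the number $B$ of traversals from $\partial S$ into $S$. Since the walk begins inside $S$ (at $x$) and lies outside $S$ at time $T$ (at $w$), the number of crossings out of $S$ exceeds the number of crossings into $S$ by exactly one, giving the identity $A=B+1$.

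The heart of the argument is a matching per-vertex inequality at each boundary vertex. Fix $y\in\partial S$, and let $a_y$ be the number of arrivals at $y$ coming from $S$ and $b_y$ the number of departures from $y$ into $S$, so that $A=\sum_y a_y$ and $B=\sum_y b_y$. By the reflecting-boundary property the rotor at $y$ sends the particle to all of its neighbours in $S$ before any other neighbour, so $b_y$ equals the smaller of the number of visits to $y$ and the number of neighbours of $y$ in $S$. On the other hand $a_y$ is at most the number of visits to $y$, and, because no directed edge is used twice, it is also at most the number of neighbours of $y$ in $S$; hence $a_y\le b_y$. Summing over $y$ yields $A\le B$, contradicting $A=B+1$.

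I expect the main obstacle to be recognizing that a purely local argument at the exit vertex $v$ does \emph{not} suffice: tracking only $v$ gives no contradiction, since its incoming and outgoing edges can all be distinct and the reflecting quota can be met without repetition. The contradiction emerges only globally, from the interplay of the net-flow identity $A=B+1$ across the whole cut with the per-vertex bound $a_y\le b_y$ that marries edge-distinctness (from Lemma~\ref{twice}) to the reflecting-boundary hypothesis. Having ruled out leaving before returning, the return itself follows because the walk cannot remain forever in the finite set $S\cup\partial S$ without traversing some directed edge twice, and by Lemma~\ref{twice} any such repetition forces a return to $x$; thus the walk returns to $x$, and does so before leaving.
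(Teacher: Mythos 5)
Your proof is correct, but it takes a genuinely different route from the paper's. The paper proves the stronger statement that the walk returns to $x$ before making \emph{any} step from $\partial S$ into $S^C$ --- in particular, before even moving between two boundary vertices. Under that strengthening, the local argument you dismiss does work at the exit vertex: since no $\partial S\to S^C$ step has yet occurred, every arrival at that vertex $y$ came from a neighbour in $S$, while $y$ has already emitted to all of its neighbours in $S$, so it has received at least $d_y^S+1$ times from only $d_y^S$ possible sources (writing $d_y^S$ for the number of neighbours of $y$ in $S$); the pigeonhole principle yields a doubled directed edge and Lemma~\ref{twice} finishes. You instead keep the literal notion of leaving $S\cup\partial S$, where boundary-to-boundary moves do spoil the local count, and compensate with a global conservation argument across the edge cut of $S$: the net-flow identity $A=B+1$ played against the per-vertex bound $a_y\le\min(n_y,d_y^S)\le b_y$, where $n_y$ is the number of visits to $y$. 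Both proofs invoke Lemma~\ref{twice} in the same two ways (no doubled edge before a return; a doubled edge forces a return). The paper's version is shorter and yields a slightly stronger conclusion; yours is a clean, self-contained double-counting argument that also explicitly settles the case where the walk never leaves $S\cup\partial S$ (using finiteness of $S$, which holds in all of the paper's applications though it is not stated in the lemma). One small imprecision: your equality $b_y=\min(n_y,d_y^S)$ tacitly uses that $y$ emits at most $d_y$ times, i.e.\ the no-doubled-edge fact; only the inequality $b_y\ge\min(n_y,d_y^S)$ is needed, and that follows from the reflecting-boundary property alone.
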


\begin{proof}
  We prove the stronger statement that the walk will return to $x$ before
  making any step from $\partial S$ to $S^C$. Consider the first such
  step, from some $y\in \partial S$ to some $z\not\in S$. At this time,
by the
  definition of reflecting boundary, $y$ has previously sent the particle
  to each of its neighbours in $S$. However, by our assumption it has
  received the particle only from its neighbours in $S$, therefore it must
  have received it twice from some such neighbour. Now apply
  Lemma~\ref{twice}.
\end{proof}

\begin{proof}[Proof of Proposition~\ref{reflect}]
  Suppose the particle started at $x$, and is currently at $x$, and let $A$
  be the (finite) set of vertices that have been visited. Then $A\subseteq
  S$ for some $S$ which had reflecting boundary in the initial rotor
  configuration. Since $\partial S$ has not yet been visited, $S$ still has
  reflecting boundary. By Lemma~\ref{return}, the walk will return again to
  $x$ (before leaving $S\cup \partial S$). Iterating this shows that the
  walk is recurrent.
\end{proof}

\begin{proof}[Proof of Theorem~\ref{T:zd}]
  By Proposition \ref{reflect}, it suffices to choose an initial rotor
  configuration so that every cube of the form $[-n,n]^d$
  ($n=0,1,2,\dots$) has reflecting boundary. Since each vertex $z$ of
  $\partial ([-n,n]^d)$ has only one neighbour $y$ in $[-n,n]^d$, this is
  achieved by setting the rotor at $z$ so that it will next point to $y$.
  Since the boundaries of different cubes are disjoint, this can be done
  for all $z$ and $n$.
\end{proof}

\begin{proof}[Proof of Theorem~\ref{T:any}]
  Let $S_0$ be any finite non-empty set of vertices, and construct
  $S_1,S_2,\dots$ inductively by $S_{i+1}:=S_i\cup \partial S_i$. By
  Proposition \ref{reflect}, it suffices to choose the rotor mechanism and
  configuration so that each $S_i$ has reflecting boundary. This is clearly
  possible, since the sets $\partial S_0,\partial S_1,\dots$ are disjoint.
\end{proof}

To prove Theorem \ref{T:planar}, we need a lemma about planar
graphs. Given a planar embedding of a planar graph, the edges
incident to a given vertex fall in some cyclic order around it.
We say that a set of vertices $S$ is \df{pincerless} if for
every $x\in\partial S$, either all neighbours of $x$ lie in
$S$, or the incident edges joining $x$ to $S$ lie in one
contiguous interval in the cyclic order around $x$.

\begin{lemma}\label{smooth}
Let $G$ be an infinite, connected, planar, simple graph with
all degrees finite, and with a locally finite planar embedding.
Any finite set of vertices is a subset of some finite
pincerless set.
\end{lemma}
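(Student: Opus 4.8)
The plan is to start from the given finite set $T$, first enlarge it to a finite \emph{connected} set $S\supseteq T$ (possible since $G$ is connected: adjoin the vertices of finitely many paths joining the components of $T$), and then repeatedly enlarge $S$ until it becomes pincerless, controlling finiteness through the local finiteness of the embedding. Throughout I work with the embedded finite subgraph $\Gamma_S$ consisting of the vertices of $S$ together with all edges of $G$ between them; since $S$ is connected, $\Gamma_S$ is a connected compact subset of $\R^2$ whose complement has a single unbounded face and finitely many bounded faces, and the union $N$ of $\Gamma_S$ with its bounded faces is a topological disk with a single boundary Jordan curve. The enlargement alternates two moves: (i) if some $x\in\partial S$ carries a pincer, adjoin $x$ to $S$; and (ii) adjoin every vertex of $G$ lying in a bounded face of $\Gamma_S$. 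Move (ii) adds only finitely many vertices, each bounded face being a bounded region of the plane.

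The geometric heart of the argument is that a pincer traps a bounded region. Suppose $x\in\partial S$ has a pincer, so its neighbours in $S$ occupy at least two arcs of the cyclic order around $x$; choose neighbours $a,b\in S$ bounding one gap, a neighbour $c\notin S$ inside that gap, and (since there are at least two gaps) a further neighbour $c'\notin S$ in the complementary arc. Because $S$ is connected there is a simple path $P$ in $\Gamma_S$ from $a$ to $b$, and the edges $xa$, $xb$ together with $P$ form a Jordan curve $\gamma$ (no two edges of $G$ cross, and $xa,xb$ meet $P$ only at $a,b$). The gap-neighbours $c,c'$ leave $x$ into the two angular sectors separated by $xa$ and $xb$, hence on opposite sides of $\gamma$; so one of them, say $c$, emanates into the bounded region $R$ enclosed by $\gamma$, and since the edge $xc$ cannot cross $\gamma$, the vertex $c$ lies in $R$. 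Thus $c$ is a vertex of $S^C$ trapped in a bounded region. Once $x$ is adjoined by move (i), the curve $\gamma$ consists entirely of edges of $\Gamma_{S\cup\{x\}}$, so $c$ then lies in a bounded face and is absorbed by move (ii). Hence every individual pincer is eliminated by the two moves.

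The main obstacle is to show that this process terminates, that is, that only finitely many vertices are ever added; the difficulty is that move (i) pushes $\partial S$ outward and could in principle create fresh pincers at ever more distant vertices, exactly the unbounded growth seen when one iterates $S\mapsto S\cup\partial S$. Two facts are meant to tame this. First, face-filling is self-limiting: after a single application of move (ii) no vertex of $G$ lies in any bounded face of the new subgraph, since every edge issuing from a vertex interior to a bounded face $F$ of $\Gamma_S$ is confined to $\overline{F}$ (it cannot cross $\partial F\subseteq\Gamma_S$), so all neighbours of newly filled vertices are already filled or lie on $\partial F\subseteq S$. Second, and this is the crux, I would bound the outward spread by a monotone complexity of the boundary curve $C=\partial N$ of the filled disk: resolving a pincer swallows the bounded bay cut off between two edges of the offending vertex and the arc of $C$ joining their crossing points, and the natural candidate potential is the number of edges of $G$ meeting $C$, which is finite by local finiteness. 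Verifying that this quantity (or a suitable refinement accounting for the edges newly exposed when $x$ is engulfed) strictly and permanently decreases is the delicate point; once termination is secured, the resulting finite set contains $T$ and has no pincers, which is the assertion of the lemma.
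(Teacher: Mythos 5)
Your geometric core---that a pincer at $x\in\partial S$ traps at least one of the gap-neighbours inside a bounded Jordan domain---is correct, and is essentially the same planarity argument the paper uses (there phrased as: two neighbours in the set and two neighbours connected to infinity cannot alternate in the cyclic order around $x$). But your overall scheme has a genuine gap, which you yourself flag: termination of the iteration. Your move (i) adjoins the offending vertex $x$ itself to $S$. This pushes $\partial S$ outward: all of $x$'s previously unexposed neighbours become boundary vertices and may carry fresh pincers, and nothing in your argument rules out this cascading forever. The candidate potential you propose---the number of edges of $G$ meeting the boundary curve of the filled disk---does not decrease in general: engulfing $x$ removes the (at least two) edges from $x$ into $S$ from the count, but exposes all of $x$'s remaining edges, which can be arbitrarily many more. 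Since the lemma is precisely the assertion that some \emph{finite} pincerless superset exists, leaving termination as ``the delicate point'' leaves the proof without its main content.

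The paper sidesteps this entirely with a one-shot construction that never adds a pincer vertex to the set. After making $A$ connected, for each $x\in\partial A$ it classifies the neighbours of $x$ according to whether they lie in $A$, in a finite component, or in an infinite component of $(A\cup\{x\})^C$; the alternation argument yields a minimal cyclic interval $J_x$ containing all $A$-neighbours and no infinite-component neighbours, and one adjoins to $A$ only the finite components met by $J_x$. Finiteness of the result is then automatic (finitely many boundary vertices, each contributing finitely many finite components, each bounded in the plane---no potential function is needed), and the remaining work is a short verification that the resulting set $B$ is pincerless, resting on the observation that any vertex adjacent to one of the adjoined components either lies in $A$ or lies in that same component, hence in $B$. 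If you want to rescue your iterative approach, the natural fix is to replace move (i) by ``adjoin the trapped finite component(s)'' rather than ``adjoin $x$''; but you would then still need to show the process closes up after a single round, which is in effect exactly the verification the paper carries out.
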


\begin{proof}[Proof of Lemma~\ref{smooth}]
Let $A$ be a finite set; we will show that it is a subset of
some finite pincerless $B$.  By enlarging $A$ if necessary, we
may assume that $A$ is connected (i.e.\ it induces a connected
subgraph of $G$).

Consider any $x\in \partial A$.  Each neighbour of $x$ lies
either in $A$, or in a finite or an infinite component of
$(A\cup\{x\})^C$.  We call the three types $A$-neighbours,
$F$-neighbours, and $I$-neighbours, respectively.  Note that
there is at least one $A$-neighbour.  We claim that it is
impossible for two $A$-neighbours $a,a'$ and two $I$-neighbours
$i,i'$ to alternate in the cyclic order of neighbours of $x$
(i.e.\ to occur in the order $aia'i'$ when we delete all other
neighbours from the order).  This follows from planarity and
local finiteness, because $a$ and $a'$ are connected by a path
in $A( \not\ni x)$, while $i$ and $i'$ are each connected to
infinity off $A\cup\{x\}$.  Therefore, there exists some
interval in the cyclic order of neighbours of $x$ that contains
all the $A$-neighbours and no $I$-neighbours.   Let $J_x$ be
the unique minimal such interval, if there is at least one
$I$-neighbour, and otherwise the set of all $x$'s neighbours.
Define $D_x$ to be the union of all the finite components of
$(A\cup\{x\})^C$ corresponding to $F$-neighbours in $J_x$.
Define
$$B:= A\cup \bigcup_{x\in\partial A} D_x.$$

The set $B$ is clearly finite, and we must check that it is pincerless.
First, we claim that if a vertex $z$ is adjacent to some vertex in $D_y$ for
some $y\in\partial A$ with $y\neq z$, then $z\in B$. This follows because
either $z\in A\subseteq B$, or $z\in (A\cup\{y\})^C$, in which case $z$ lies
in the same component of the latter set as does its neighbour in $D_y$, so
since $D_y$ is a union of such components, $z\in D_y\subseteq B$.

Now consider any $x\in\partial B$.  We must have $x\in\partial
A$, otherwise the above claim would imply $x\in B$.  Now by the
definition of $D_x$, all of $x$'s neighbours in the interval
$J_x$ lie in $B$.  Therefore to check the pincerless condition
at $x$ it suffices to show that no neighbour of $x$ not in
$J_x$ lies in $B\setminus A$.  By the definition of $B$, such a
neighbour would lie in $D_y$ for some $y\neq x$, and hence by
the claim again we would have $x\in B$, a contradiction.
\end{proof}

\begin{proof}[Proof of Theorem \ref{T:planar}]
  Let $S_0$ be any finite non-empty set of vertices, and define
  $S_1,S_2,\ldots$ inductively by taking $S_{i+1}$ to be a finite pincerless
  set containing $S_i\cup\partial S_i$, by Lemma \ref{smooth}. For $i\geq
  1$, since $S_i$ is pincerless, and the rotors rotate clockwise or
  anticlockwise, the rotors in $\partial S_i$ can be given initial
  directions so that $S_i$ has reflecting boundary. Since the $\partial
  S_i$ are disjoint, this can be done for all $i\geq1$ simultaneously.
\end{proof}

\section{Exact number of visits}
\label{S:exact}

We now turn to Theorem~\ref{T:zd-exact}, which is a consequence
of the more general result below.  Given a rotor configuration
and some set of vertices $U\subseteq V(G)$, we associate a
directed graph with vertex set $V$, and a directed edge from
$x$ to $y$ whenever $x\in U$ and the rotor at $x$ points to
$y$.

\begin{prop}\label{exact}
  Let $S_0,S_1,\ldots$ be finite sets such that $S_0=\{a\}$, and $S_i\cup
  \partial S_i \subseteq S_{i+1}$ for all $i$. Suppose that in the initial
  rotor configuration, for each $i\geq 0$:
  \begin{ilist}
  \item $S_i$ has reflecting boundary, and
  \item the rotors at vertices of $S_{i+1} \setminus (S_i\cup\partial S_i)$
    form a directed forest pointing towards $S_i\cup \partial S_i$.
  \end{ilist}
Consider the rotor walk started at $a$, and let $b$ be the next
vertex it visits. Then for every $k\geq 1$, in the time
interval from the $k$th to the $(k+1)$st traversal from $a$ to
$b$ (inclusive and exclusive respectively), the rotor walk
traverses each edge incident to $S_k$ exactly once in each
direction, and no other edges.
\end{prop}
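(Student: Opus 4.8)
The plan is to prove the statement by strong induction on $k$, carrying along not merely the claimed filling property but also a precise description of the rotor configuration at the moment of the $k$th traversal of $(a,b)$. Write $\rho_0$ for the initial configuration and call the interval from the $k$th traversal (inclusive) to the $(k+1)$st (exclusive) \emph{epoch $k$}. The single most useful structural observation is that epoch $k$ is exactly one full turn of the rotor at $a$: since every edge at $a$ is incident to $S_k$, the claim forces $a$ to emit the particle exactly $d_a$ times, so epoch $k$ consists of $d_a$ excursions leaving $a$ for its neighbours $b=b_1,b_2,\dots,b_{d_a}$ in cyclic order and returning, after which the step $a\to b_1$ recurs and epoch $k+1$ begins. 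Thus at the start of every epoch the particle sits at $a$ with its rotor about to point to $b$, which is the invariant that lets the induction close.

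The \emph{at most once} half I would get essentially for free from Lemma~\ref{twice}. Epoch $k$ terminates precisely at the second traversal of $(a,b)$ within the epoch; because the rotor at $a$ cycles through $b_1,\dots,b_{d_a}$ in order, this second traversal of $(a,b)$ is the very first directed edge to be repeated during the epoch. Hence throughout epoch $k$ no directed edge is traversed twice, so every edge is traversed at most once in each direction. For the bookkeeping I would also record that epochs $1,\dots,k-1$ never leave $S_k$: the vertices visited in epoch $j$ lie in $S_j\cup\partial S_j\subseteq S_{j+1}\subseteq S_k$, so $\partial S_k$ is untouched before epoch $k$ and, by hypothesis (i), $S_k$ still has reflecting boundary when epoch $k$ begins.

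The engine driving the frontier outward is the way one epoch transforms the boundary. Once epoch $k$ has traversed each edge incident to $S_k$ once in each direction, every $x\in S_k$ has emitted once to each neighbour, so its rotor has made a full turn and is reset to $\rho_0(x)$; meanwhile every $y\in\partial S_k$ has emitted once to each of its neighbours in $S_k$ and, since $S_k$ had reflecting boundary, its rotor has now advanced \emph{past} all its $S_k$-neighbours and points outward. Combined with hypothesis (ii), which makes the rotors on the next layer $S_{k+1}\setminus(S_k\cup\partial S_k)$ a forest pointing inward, this is exactly the configuration needed for epoch $k+1$: the outward-pointing rotors on $\partial S_k$ let the walk escape into the new layer, the inward-pointing forest pulls it back, and the reflecting boundary of $S_{k+1}$ (still intact by the previous paragraph) keeps it from ever leaving $S_{k+1}\cup\partial S_{k+1}$. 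In particular the argument of Lemma~\ref{return} shows the walk never steps from $\partial S_k$ to $S_k^C$ during epoch $k$, ruling out the traversal of any edge not incident to $S_k$.

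The crux, and the step I expect to be the real work, is the remaining direction within a single epoch: that the $d_a$ excursions \emph{do} collectively traverse every edge incident to $S_k$ at least once in each direction, equivalently that every vertex of $S_k$ is driven through a full rotor cycle and every vertex of $\partial S_k$ emits to each of its $S_k$-neighbours. I would attack this with an odometer-and-balance argument: since each excursion is a closed walk based at $a$, entries and exits balance at every vertex, and the rotor mechanism distributes a vertex's emissions among its neighbours as evenly as possible in cyclic order, so ``not yet fully filled'' can be localised. Starting from the fact that $a$ is fully filled (it completes its cycle by the definition of the epoch) and that, by no-edge-twice and balance, each neighbour of $a$ must in turn emit to $a$ exactly once, I would propagate the fully-filled property outward through $S_k$ by an inner induction on the nested sets, using hypothesis (ii) to fix the rotor start positions on each newly reached layer and hypothesis (i) to terminate the propagation exactly at $\partial S_k$. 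Making this airtight --- in particular checking that the forest orientation forces each new vertex to be entered the correct number of times and returns the particle inward without creating a premature repeat of $(a,b)$ --- is where the delicate combinatorics lies, and is the part I would expect to spend the most care on.
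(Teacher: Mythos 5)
Your outline gets the easy halves right and by essentially the same means as the paper: Lemma~\ref{twice} does give ``at most once in each direction'' (the epoch ends at the first repeated directed edge, which must be $(a,b)$), and the reflecting-boundary argument of Lemma~\ref{return} does confine the walk and rule out edges not incident to $S_k$. But the heart of the proposition is the direction you explicitly defer --- that every edge incident to $S_k$ is traversed \emph{at least} once in each direction --- and there your proposal has a genuine gap. The odometer-and-balance propagation you sketch does not close on its own: local balance plus no-repeated-edge only says each vertex emits at most $d$ times and emits as often as it receives; it does not force a vertex of $S_k$ to complete a full rotor cycle. Knowing that a neighbour $v$ of $a$ sends to $a$ once does not make $v$ full unless you also know that $a$ is the \emph{last} neighbour in $v$'s cycle starting from $v$'s current rotor position --- i.e.\ unless $v$'s rotor points at $a$ (or more generally along a tree towards $a$) at the start of the epoch. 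That global structural condition on the whole configuration, not just at $a$, is exactly what is missing from your invariant.

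The paper supplies it by strengthening the induction hypothesis to: \emph{at time $t_k$ the rotors in $S_k$ form a unicycle rooted at $a$} (a single oriented cycle through $a$ with trees feeding into it). It then builds a finite auxiliary graph $F_k$ from the edges incident to $S_k$ (fixing the boundary rotors to sweep through their $S_k$-neighbours last-first), observes that $F_k$ carries a unicycle rooted at the particle, and invokes the known Eulerian-circuit property of unicycles on finite Eulerian graphs \cite[Lemma~4.9]{hlmppw}: $2m$ steps traverse every edge once in each direction and restore the configuration. That cited lemma is precisely the ``delicate combinatorics'' you flag as remaining; it is a real theorem with its own proof, not a routine balance computation. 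Two further slips in your bookkeeping point to the same issue: after a full turn a rotor returns to its position at the \emph{start of the epoch}, not to $\rho_0$ (vertices of $\partial S_{k-1}$ were only partially rotated in epoch $k-1$); and the relevant fact about $y\in\partial S_k$ after epoch $k$ is that its rotor points \emph{at its last $S_k$-neighbour}, i.e.\ into $S_k$ --- this, together with hypothesis (ii), is what makes $S_{k+1}$ a unicycle and lets the induction advance. To complete your proof you would need either to import the unicycle lemma as the paper does, or to prove an equivalent Eulerian-circuit statement from scratch.
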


\begin{proof}
  By a \df{unicycle rooted at $x$} we mean a directed graph comprising a
  single oriented cycle passing through vertex $x$, together with a
  collection of directed trees rooted on the cycle and pointing towards it.
  We will use the following fact about the rotor walk on a {\em finite}
  undirected graph (or indeed an Eulerian directed graph); see e.g.\
  \cite[Lemma~4.9]{hlmppw}. Starting from any rotor configuration that
  forms a unicycle rooted at the current particle location, if $2m$ rotor
  steps are performed, where $m$ is the number of edges of the graph, then
  each edge is traversed exactly once in each direction, and
  the rotors and particle finish in their initial positions.

The rotor walk is recurrent by Proposition \ref{reflect}. Let
$t_k$ be the time just before the $(k+1)$st traversal from $a$
to $b$, so $t_0=0$, and at time $t_k$ the particle is at $a$.
We will prove by induction that for all $k\geq 1$: at time
$t_k$, no vertex outside $S_k$ has been visited, and the rotors
in $S_k$ form a unicycle rooted at $a$. In the process of
proving this we will establish the claim of the proposition.

First consider the time period from $t_0$ to $t_1$. Since $S_0=\{a\}$ has
reflecting boundary, the particle simply traverses each edge incident to $a$
in each direction, so the claim of the proposition holds for $k=0$. At time
$t_1$, the rotors of $\partial S_0$ all point towards $a$, so the rotors of
$S_0\cup \partial S_0$ form a unicycle rooted at $a$. Since the vertices of
$S_1\setminus (S_0\cup\partial S_0)$ have not been visited, condition (ii) of
the proposition thus implies that the rotors of $S_1$ form a unicycle rooted
at $a$, establishing the inductive hypothesis for $k=1$.

Now suppose the inductive hypothesis holds for some $k\geq 1$.
Consider the finite subgraph $F_k$ of $G$ comprising all edges
incident to $S_k$, with vertex set $S_k\cup\partial S_k$.
Consider a rotor configuration on $F_k$ given as follows.  Let
the vertices in $S_k$ inherit their positions from $G$ at time
$t_k$. Recalling that $S_k$ has reflecting boundary in $G$, fix
the rotor at each $y\in\partial S_k$ so that it will next send
the particle to each of $y$'s neighbours in $S_k$, in the same
order as in $G$; thus, the rotor should initially point to the
last of these neighbours. Since the rotors of $S_k$ form a
unicycle rooted at $a$, the same applies to $F_k$. Therefore,
running the rotor walk on $F_k$, started at $a$, until just
before the second traversal from $a$ to $b$ results in each
edge of $F_k$ being traversed once in each direction, and the
same final rotor configuration. Since the rotors in $\partial
S_k$ have only sent the particle towards $S_k$, the behaviour
of the walk on $G$ over this time interval is identical.  Hence
the claim of the proposition holds for $k$. At time $t_{k+1}$,
the rotors in $S_k\cup\partial S_k$ form a unicycle rooted at
$a$, and condition (ii) again implies that the same holds when
all rotors in $S_{k+1}$ are included, establishing the
inductive hypothesis for $k+1$.
\end{proof}

\begin{proof}[Proof of Theorem \ref{T:zd-exact}]\sloppypar
  We apply Proposition~\ref{exact}, with $S_i=[-i,i]^d$. The boundary
  $\partial S_i$ consists precisely of those vertices $x$ of
  $S_{i+1}\setminus S_i$ that have $\alpha(x)=1$ (i.e.\ those on
  $(d-1)$-dimensional faces of $S_{i+1}\setminus S_i$). We set the rotors at these
  vertices to point {\em next} towards $S_i$, and those of
  $S_{i+1}\setminus(S_i\cup\partial S_i)$ to form a forest pointing towards
  $S_i\cup\partial S_i$, as required. By Proposition~\ref{exact}, between
  times $t_k$ and $t_{k+1}$, each vertex of $S_k$ is visited $2d$ times,
  and each vertex of $\partial S_k$ is visited once. Summing over $k$ gives
  the claimed expression.
\end{proof}

\section*{Open Questions}

\begin{ilist}
\item Do there exist a graph and rotor mechanism for which
    every initial rotor configuration is transient?
\item While Theorem~\ref{T:zd-exact} provides a detailed
    description of the behaviour of certain recurrent rotor
    configurations, some simple transient examples remain mysterious.
    On $\Z^2$, let each rotor initially point East and rotate anticlockwise.  Start
    a rotor walk at $0$, and restart it at $0$ after each escape
    to infinity.  What is the asymptotic growth rate of the
    number of escapes to infinity prior to the $n$th visit to
    $0$, as $n\to\infty$?  (By a result of Schramm,
    \cite[Theorem~10]{h-propp}, it is $o(n)$).
\end{ilist}

\bibliographystyle{habbrv}
\bibliography{bib}

\vspace{2mm} \noindent
{\sc Omer Angel:} {\tt angel at math.ubc.ca} \\
Department of Mathematics, University of British Columbia, Vancouver BC V6T
1Z2, Canada

\vspace{2mm} \noindent
{\sc Alexander E. Holroyd:} {\tt holroyd at microsoft.com} \\
Microsoft Research, 1 Microsoft Way, Redmond WA 98052, USA

\end{document}